\newtheorem{theorem}{Theorem}[section]
\newtheorem{corollary}{Corollary}[section]
\newtheorem{lemma}{Lemma}[section]
\author[Mitre Dourado et. al]{Mitre C. Dourado\affiliationmark{1}\thanks{Partially supported by CNPq (305404/2020-2) and FAPERJ (211.753/2021), Brazil.}
  \and Vitor S. Ponciano\affiliationmark{1}\thanks{Partially supported by CAPES, Brazil.}
  \and R\^omulo L. O. da Silva\affiliationmark{2}}
\title[Corrigendum to ``On the monophonic rank of a graph"]{Corrigendum to ``On the monophonic rank of a graph" [Discrete Math. Theor. Comput. Sci. 24:2 (2022) \#3]}
\affiliation{
  Instituto de Computa\c{c}\~ao, Universidade Federal do Rio de Janeiro, Rio de Janeiro, Brazil\\
  Faculdade de Ci\^encia e Tecnologia, Instituto de Matem\'atica, Universidade Federal do Par\'a, Par\'a, Brazil}
\keywords{monophonic convexity, rank of a graph, starlike graph}
\begin{document}
\publicationdata
{vol. 25:2}
{2024}
{26}
{10.46298/dmtcs.11423}
{2023-06-02; 2023-06-02; 2023-11-02}
{2023-11-23}
\maketitle
\begin{abstract}
In this corrigendum, we give a counterexample to Theorem 5.2 in ``On the monophonic rank of a graph" [Discrete Math. Theor. Comput. Sci. 24:2 (2022) \#3]. We also present a polynomial-time algorithm for computing the monophonic rank of a starlike graph.
\end{abstract}

\section{The counterexample}

In the paper ``On the monophonic rank of a graph", which appeared in \emph{Discrete Math. Theor. Comput. Sci.} 24:2 (2022) \#3, we claimed in Theorem~$5.2$ the \NP-completeness of {\sc Monophonic rank} for $2$-starlike graphs. This result was used in Corollary~$5.1$ to prove the \NP-completeness of {\sc Monophonic rank} for $k$-starlike graphs for any fixed $k \ge 2$. However, the reduction given in Theorem~$5.2$ is not correct. We show in the sequel that the example given in Figure~\ref{fig:G} illustrates a counterexample. Consequently, Corollary~$5.1$ does not hold as well.

\begin{figure}
	\centering
	
	\begin{tikzpicture}[scale=0.65]
		
		\pgfsetlinewidth{1pt}
		
		\tikzset{vertex/.style={circle,  draw, minimum size=5pt, inner sep=1pt}}
		
		\tikzset{setU/.style={circle,  draw, minimum size=35pt, inner sep=1pt}}
		
		\tikzset{vertexset/.style={rectangle,  draw, dashed, minimum size=25pt, inner sep=1pt, minimum height=1.5cm, minimum width=2cm}}
		
		\def\x{11}
		\def\y{13}
		
		\draw (-11.5+\x,2.5+\y) node[above] {$G$};
		
		\node[vertex] (v1) at (-9+\x,3+\y) [label=left:$v_1$]{};
		\node[vertex] (v2) at (-8+\x,3+\y) [label=right:$v_2$]{};
		\node[vertex] (v3) at (-8.5+\x,2+\y) [label=left:$v_3$]{} edge (v1) edge (v2);
		\node[vertex] (v4) at (-8.5+\x,1+\y) [label=left:$v_4$]{} edge (v3);
		\node[vertex] (v5) at (-9+\x,0+\y) [label=left:$v_5$]{} edge (v4);
		\node[vertex] (v6) at (-8+\x,0+\y) [label=right:$v_6$]{} edge (v4) edge (v5);
		\node[vertex] (v7) at (-8.5+\x,-1+\y) [label=left:$v_7$]{} edge (v4) edge (v5) edge (v6);
		
		\draw (-6,8) node[above] {$G'$};
		\draw[][black] (-4,-15) rectangle (3cm, 9cm); 
		
		\def\H{-1}
		\def\V{1}
		\def\i{1}
		
		\node[setU]  (U1) at (\H ,\V + 6.5) [label=left:$U_\i$]{};
		\node[vertex]  (u115) at (\H +3,\V + 6.5) [label=left:$u^{15}_\i$]{};
		\node[vertex] (u11) at (\H +9,\V + 7) [label=right:$w^{13}_\i$]{} edge(u115);
		\node[vertex]  (u14) at (\H +9,\V + 6) [label=right:$w^{14}_\i$]{} edge(u115) edge(u11);
		\node[vertexset]  (W1) at (\H +10 ,\V + 6.5) [label=right:$W_\i$]{};
		
		\def\V{-2.5}
		\def\i{2}
		
		\node[setU]  (U2) at (\H ,\V + 6.5) [label=left:$U_\i$]{};
		\node[vertex]  (u115) at (\H +3,\V + 6.5) [label=left:$u^{15}_\i$]{};
		\node[vertex] (u11) at (\H +9,\V + 7) [label=right:$w^{13}_\i$]{} edge(u115);
		\node[vertex]  (u14) at (\H +9,\V + 6) [label=right:$w^{14}_\i$]{} edge(u115) edge(u11);
		\node[vertexset]  (W2) at (\H +10 ,\V + 6.5) [label=right:$W_\i$]{};
		
		\def\V{-6}
		\def\i{3}
		
		\node[setU]  (U3) at (\H ,\V + 6.5) [label=left:$U_\i$]{} edge(W1) edge(W2);
		\node[vertex]  (u115) at (\H +3,\V + 6.5) [label=left:$u^{15}_\i$]{};
		\node[vertex] (u11) at (\H +9,\V + 7) [label=right:$w^{13}_\i$]{} edge(u115);
		\node[vertex]  (u14) at (\H +9,\V + 6) [label=right:$w^{14}_\i$]{} edge(u115) edge(u11);
		\node[vertexset]  (W3) at (\H +10 ,\V + 6.5) [label=right:$W_\i$]{} edge(U1) edge(U2);
		
		\def\V{-9.5}
		\def\i{4}
		
		\node[setU]  (U4) at (\H ,\V + 6.5) [label=left:$U_\i$]{} edge(W3);
		\node[vertex]  (u115) at (\H +3,\V + 6.5) [label=left:$u^{15}_\i$]{};
		\node[vertex] (u11) at (\H +9,\V + 7) [label=right:$w^{13}_\i$]{} edge(u115);
		\node[vertex]  (u14) at (\H +9,\V + 6) [label=right:$w^{14}_\i$]{} edge(u115) edge(u11);
		\node[vertexset]  (W4) at (\H +10 ,\V + 6.5) [label=right:$W_\i$]{} edge(U3);
		
		\def\V{-13}
		\def\i{5}
		
		\node[setU]  (U5) at (\H ,\V + 6.5) [label=left:$U_\i$]{} edge(W4);
		\node[vertex]  (u115) at (\H +3,\V + 6.5) [label=left:$u^{15}_\i$]{};
		\node[vertex] (u11) at (\H +9,\V + 7) [label=right:$w^{13}_\i$]{} edge(u115);
		\node[vertex]  (u14) at (\H +9,\V + 6) [label=right:$w^{14}_\i$]{} edge(u115) edge(u11);
		\node[vertexset]  (W5) at (\H +10 ,\V + 6.5) [label=right:$W_\i$]{} edge(U4);
		
		\def\V{-16.5}
		\def\i{6}
		
		\node[setU]  (U6) at (\H ,\V + 6.5) [label=left:$U_\i$]{} edge(W4) edge(W5);
		\node[vertex]  (u115) at (\H +3,\V + 6.5) [label=left:$u^{15}_\i$]{};
		\node[vertex] (u11) at (\H +9,\V + 7) [label=right:$w^{13}_\i$]{} edge(u115);
		\node[vertex]  (u14) at (\H +9,\V + 6) [label=right:$w^{14}_\i$]{} edge(u115) edge(u11);
		\node[vertexset]  (W6) at (\H +10 ,\V + 6.5) [label=right:$W_\i$]{} edge(U4) edge(U5);
		
		\def\V{-20}
		\def\i{7}
		
		\node[setU]  (U7) at (\H ,\V + 6.5) [label=left:$U_\i$]{} edge[bend right=2](W4) edge(W5) edge(W6);
		\node[vertex]  (u115) at (\H +3,\V + 6.5) [label=left:$u^{15}_\i$]{};
		\node[vertex] (u11) at (\H +9,\V + 7) [label=right:$w^{13}_\i$]{} edge(u115);
		\node[vertex]  (u14) at (\H +9,\V + 6) [label=right:$w^{14}_\i$]{} edge(u115) edge(u11);
		\node[vertexset]  (W7) at (\H +10 ,\V + 6.5) [label=right:$W_\i$]{} edge[bend right=2](U4) edge(U5) edge(U6);
		
	\end{tikzpicture}
	
	\caption{A counterexample to the proof of Theorem~$5.2$ in~(\cite{DPS-2022-mono-rank}).}
	
	\label{fig:G}
	
\end{figure}
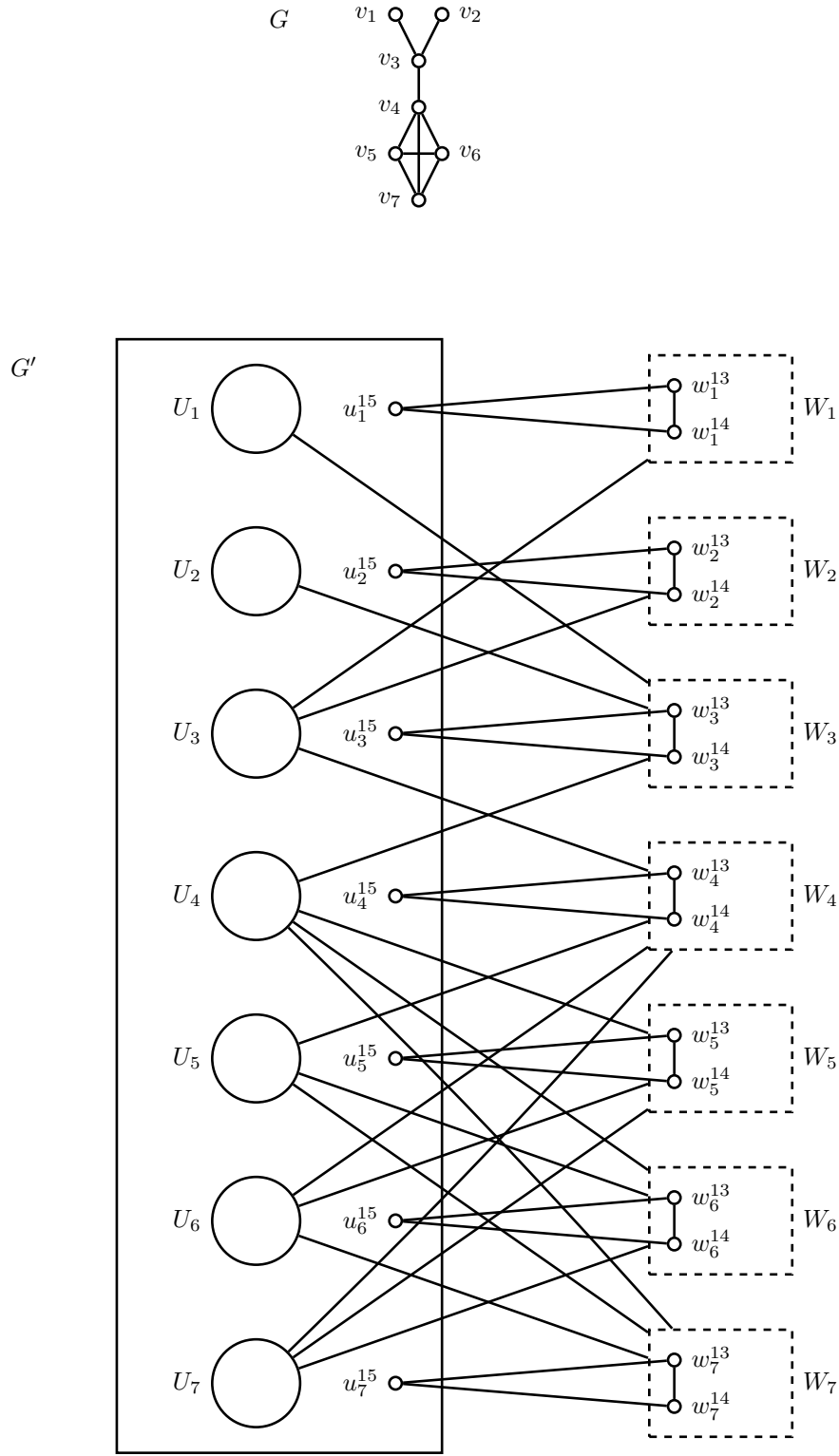

We need some definitions. Consider a graph $G$. The open and the closed neighborhoods of a vertex $v$ are denoted by $N(v)$ and $N[v]$, respectively. A vertex is {\em simplicial} if its closed neighborhood induces a complete graph. It is clear that a simplicial vertex is not an internal vertex of an induced path. We say that $S$ is {\em monophonically convex} if the vertices of every induced path joining two vertices of $S$ are contained in $S$. The {\em monophonic convex hull} of $S$, $\langle S \rangle$, is the smallest monophonically convex set containing $S$. A set $S$ is {\em monophonic convexly independent} if $v$ is not in $\langle S - \{v\} \rangle$ for $v \in S$. The {\em monophonic rank} of $G$, $r(G)$, is the size of a largest monophonic convexly independent set of $G$.

Recall that a {\em starlike graph} $G$ admits a partition of $V(G)$ into cliques $(V_0, V_1, \ldots, V_t)$ such that $V_0$ is a maximal clique and for $i \in \{1, \ldots, t\}$ and $u,v \in V_i$, it holds $N[u] - V_i = N[v] - V_i \subset V_0$. If $|V_\ell| \le k$ for $\ell \in \{1, \ldots, t\}$, then $G$ is \emph{$k$-starlike}~(\cite{Gustedt:1993,Cerioli2006}). The 1-starlike graphs are the {\em split graphs}.
Denote by $Z$ the set of simplicial vertices of $G$. For $i \in \{0, 1, \ldots, t\}$, we denote by $X_i$ the maximal clique containing $V_i$, $C_i = X_i \cap Z$ and $C'_i = X_i - C_i$. Note that $V_0 = X_0$, that $V_i = C_i$ for $i \in \{1, \ldots, t\}$, and that $C_0$ can be an empty set.

\begin{lemma} \label{lem:hull}
	If $G$ is a starlike graph and $S \subseteq V(G)$, then	every vertex $v \in \langle S \rangle - S$ belongs to $C'_0$ and is an internal vertex of an induced $(u,u')$-path such that $u \in S \cap Z \cap N(v)$ and $u' \in \langle S \rangle$.
\end{lemma}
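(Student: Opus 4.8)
The plan is to analyze how a single vertex can be forced into a monophonic convex set, and then iterate. First I would observe that if $v\in\langle S\rangle - S$, then $v$ becomes an element of the convex hull at some finite stage of the standard iterative closure process: starting from $S_0=S$ and letting $S_{j+1}$ be the union of $S_j$ with all internal vertices of induced paths joining two vertices of $S_j$. So there is a first index $j$ with $v\in S_{j+1}\setminus S_j$; fix an induced $(a,b)$-path $P$ with $a,b\in S_j$ having $v$ as an internal vertex. Since $v$ is internal to an induced path, $v$ is not simplicial, so $v\notin Z$; the only non-simplicial vertices of a starlike graph lie in the maximal cliques $X_i$, and a vertex of $X_i$ with $i\ge 1$ that is not simplicial must in fact lie in $X_0$ as well by the structure of starlike graphs. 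I would spell out, using the defining property $N[u]-V_i=N[v]-V_i\subset V_0$, that any non-simplicial vertex belongs to $X_0$, and moreover to $C'_0 = X_0\setminus Z$; this pins down the location claim $v\in C'_0$.

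Next I would upgrade "$a,b\in S_j$" to "$a,b\in\langle S\rangle$" (immediate, since $S_j\subseteq\langle S\rangle$) and then work on replacing the endpoints $a,b$ of $P$ by the stronger endpoints demanded in the statement, namely one endpoint in $S\cap Z\cap N(v)$. The key structural fact I would use is that in a starlike graph every induced path has at most two internal vertices outside $X_0$ near each end, and more usefully: since $v\in X_0$, a neighbor of $v$ on the path that lies outside $X_0$ must be simplicial (a vertex outside all $X_i$ is simplicial; a non-simplicial neighbor of $v$ lies in $X_0$, hence is adjacent to $v$ inside the clique $X_0$ and cannot be an internal path-vertex adjacent to $v$ without creating a chord). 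So the two neighbors $u,u'$ of $v$ on $P$ satisfy: at least one of them, say $u$, is simplicial. A simplicial vertex has no induced path passing through it as an internal vertex, so $u$ must be an \emph{endpoint} of $P$, i.e.\ $u=a\in S_j\subseteq\langle S\rangle$. I would then argue $u\in S$ itself: a simplicial vertex $u$ cannot be added at any stage of the closure process (it is never internal to an induced path), so $u\in\langle S\rangle$ forces $u\in S$. Thus $u\in S\cap Z\cap N(v)$, and taking $u'=b$ (or the short subpath of $P$ from $u$ through $v$ to the other side, which is still induced) gives an induced $(u,u')$-path through $v$ with $u'\in S_j\subseteq\langle S\rangle$.

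The step I expect to be the main obstacle is the claim that a non-simplicial neighbor of $v$ cannot be an adjacent internal vertex on the induced path $P$ — equivalently, ruling out the case where \emph{both} neighbors of $v$ on $P$ are non-simplicial and therefore in $X_0$. If $u\in X_0$ and $u'\in X_0$ are both neighbors of $v$ on $P$, then $u,u',v$ all lie in the clique $X_0$, so $uu'\in E(G)$, contradicting that $P$ is induced (it would have the chord $uu'$). This resolves the obstacle cleanly once we know $v\in X_0$, so the real content is front-loaded into establishing $v\in C'_0\subseteq X_0$; I would make sure the case analysis there (a vertex of $V(G)$ is either simplicial, or in $X_0$, using the partition $(V_0,\dots,V_t)$ and the neighborhood condition on each $V_i$) is airtight, handling separately $v\in V_0$ and $v\in V_i$ for $i\ge1$, the latter being impossible since all vertices of $V_i=C_i$ with the clique $X_i$ — wait, not all of $X_i$ — are simplicial exactly when they lie in $C_i$, while $C_i'=X_i\setminus C_i\subseteq X_0$, closing the argument.
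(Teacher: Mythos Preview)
Your proposal is correct and follows essentially the same approach as the paper: pick an induced path through $v$ with endpoints in $\langle S\rangle$, use that non-simplicial vertices of a starlike graph lie in the clique $C'_0$ to force a simplicial neighbor of $v$ on the path, and then observe that simplicial vertices in $\langle S\rangle$ must already lie in $S$. The only organizational difference is that the paper first argues one \emph{endpoint} of the path lies in $Z$ and then does a short case analysis on the path length to get adjacency with $v$, whereas you go directly to the two \emph{neighbors} of $v$ on the path and rule out both being in $C'_0$ via the chord argument; the content is the same.
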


\begin{proof}
	Let $v \in \langle S \rangle - S$. It is clear that $v$ is an internal vertex of an induced $(w,w')$-path $P$ for $w,w' \in \langle S \rangle$.
	Since $Z$ contains only simplicial vertices, we have that $v \in C'_0$.
	Since $C'_0$ is a clique, we have that at least one of $w$ and $w'$, say $w$, belongs to $Z$.
	Suppose first that $w'$ also belongs to $Z$. Note that $P$ has 3 or 4 vertices. In both cases, $v$ is adjacent to at least one of $w$ and $w'$.
	Suppose then that $w'$ belongs to $C'_0$. In this case, $P$ has $3$ vertices, which means that $v$ is adjacent to $w$.
	In all cases, $v$ is adjacent to some vertex of $Z$ belonging to $\langle S \rangle$.
	Since every vertex of $Z$ is simplicial, we have that such vertex belongs to $S$, completing the proof.
\end{proof}

Figure~\ref{fig:G} shows an input graph $G$ with $n = 7$ vertices and the resulting $2$-starlike graph $G'$ according to the reduction given in Theorem~$5.2$ in~(\cite{DPS-2022-mono-rank}), whose vertex set can be partitioned into sets $U$ and $W$ where $U$ is a clique with $70$ vertices and $G[W]$ has maximum clique of size $2$. In such proof, we claimed that $G$ has an independent set with $\lceil\frac{n+1}{2}\rceil = 4$ vertices if and only if $G'$ has a monophonic convexly independent set with $p$ vertices, where $p = n + (4n-1) \lceil \frac{n+1}{2} \rceil = 7 + 27 \times 4 = 115$.
It is easy to see that $G$ has no independent set of size $\lceil\frac{n+1}{2}\rceil = 4$. In order to see that the set $S \subset V(G')$ with $115$ vertices formed by the $56$ vertices of $U_1, U_2, W_1, W_2$, the $56$ vertices of $U_4, U_5, U_6, U_7$ and the $3$ vertices $u_5^{15}, u_6^{15}$ and $u_7^{15}$ is m-convexly independent
we use the notation given above where $V_0 = C'_0 = U$ and $Z = W$.
Since the vertices of $S \cap C'_0$ have no neighbors in $S \cap Z$, Lemma~\ref{lem:hull} implies that $S$ is indeed an m-convexly independent set of $G'$.

\section{Monophonic rank is polynomial for starlike graphs}

In Theorem~$5.1$ of~(\cite{DPS-2022-mono-rank}), we presented a polynomial-time algorithm for computing the monophonic rank of $1$-starlike graphs. Here, in Corollary~\ref{cor:starlike}, we extend such algorithm so that it works for starlike graphs.

Given a graph $G$ and a set $T \subseteq V(G)$, we write $N(T) = \underset{v \in T}{\cup}N(v)$.
If $T$ is an independent set, we define the \emph{difference} of $T$ as $d(T) = |T| - |N(T)|$,
and the \emph{critical independence difference} of $G$ as $d_c(G) = \max \{d(T) : T$ is an independent set of $G\}$. If $d(T) = d_c(G)$, then we say that $T$ is a critical independent set of $G$.

Using the notation given previously for a starlike graph $G$ with partition $(V_0, V_1, \ldots, V_t)$ of $V(G)$ into cliques, denote by $Y_i$ the union of the sets $C_j$ for $j \in \{0,1, \ldots, t\} - \{i\}$ such that $C'_j \subseteq C'_i$.
If $|N(Y_i) - Y_i| = |C'_i| - 1$ and $|Y_i| \ge |C'_i| -1$, then we can write $\{x\} = (N(Y_i) - Y_i) - C'_i$ and define $G_i = G[(C'_i - \{x\}) \cup Y_i] - E(G[Y_i])$, otherwise define $G_i = G[C'_i \cup Y_i] - E(G[Y_i])$.  Note that $G_i$ is a split graph where $Y_i$ is an independent set which can be empty. If this is the case, $G_i$ can be the empty graph. See an example in Figure~\ref{fig:splitP}.

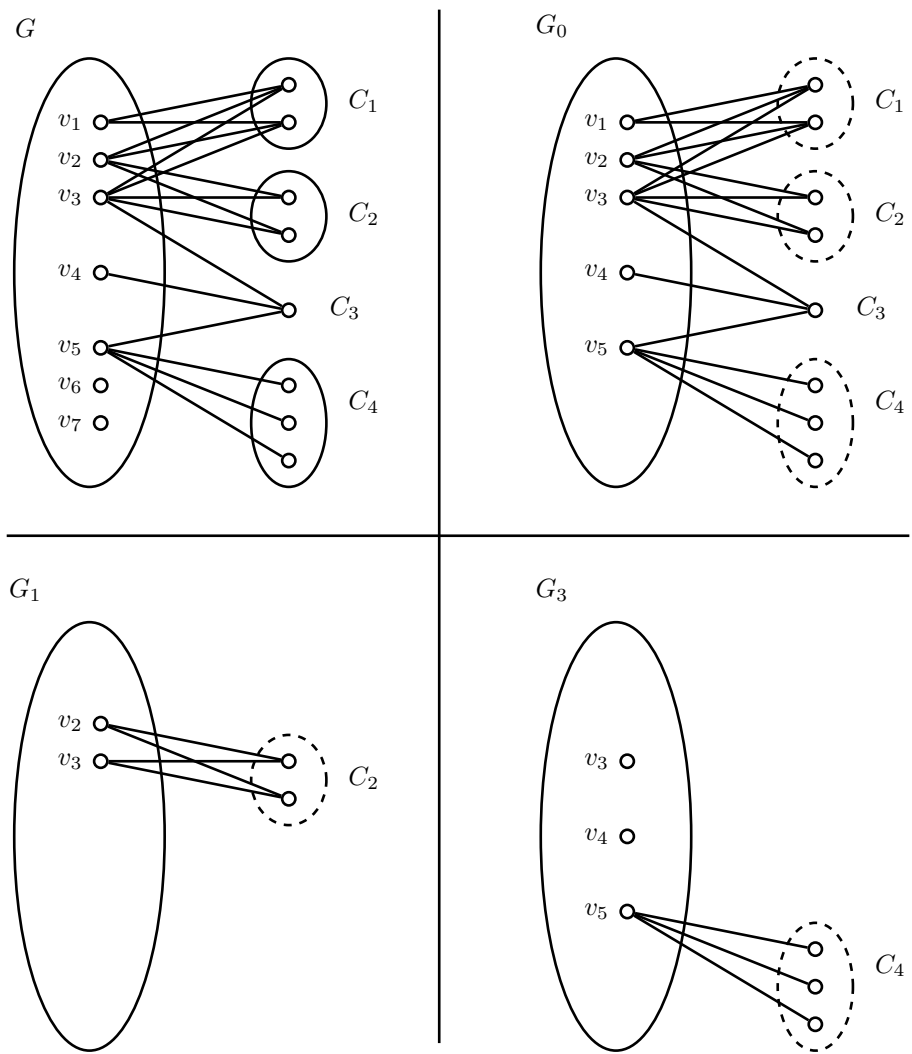
\begin{figure}
	\centering
	
	\begin{tikzpicture}[scale=0.5]
		
		\pgfsetlinewidth{1pt}
		
		\tikzset{vertex/.style={circle,  draw, minimum size=5pt, inner sep=0pt}}
		
		\draw (-6.5,-7) -- (17.5,-7);
		\draw (5,7) -- (5,-20.5);
		
		\begin{scope}[shift={(0,0)}]
			
			\draw (-6,6) node[above] {$G$};
			\draw[-] (-4.3, 0) ellipse (2cm and 5.7cm);
			\node[vertex] (v'1) at (-4,4) [label=left:$v_1$]{};
			\node[vertex]  (v'2) at (-4,3) [label=left:$v_2$]{} ;
			\node[vertex]  (v'3) at (-4,2) [label=left:$v_3$]{};
			\node[vertex] (v'4) at (-4,0) [label=left:$v_4$]{};
			\node[vertex] (v'5) at (-4,-2) [label=left:$v_5$]{};
			\node[vertex]  (v'6) at (-4,-3) [label=left:$v_6$]{} ;
			\node[vertex]  (v'7) at (-4,-4) [label=left:$v_7$]{} ;
			
			\draw (3,4) node[above] {$C_1$};
			\draw[-] (1, 4.5) ellipse (1cm and 1.2cm);
			\node[vertex] (w'1') at (1,5) [label=right:]{} edge (v'1) edge (v'2) edge (v'3);
			\node[vertex] (w'1'') at (1,4) [label=right:]{} edge (v'1) edge (v'2) edge (v'3);
			
			\draw (3,1) node[above] {$C_2$};
			\draw[-] (1, 1.5) ellipse (1cm and 1.2cm);
			\node[vertex] (w'2) at (1,2) [label=right:]{} edge (v'2) edge (v'3);
			\node[vertex] (w'3) at (1,1) [label=right:]{} edge (v'2) edge (v'3);
			
			\draw (2.5,-1.5) node[above] {$C_3$};
			\node[vertex] (w'6) at (1,-1) [label=right:]{} edge(v'3) edge(v'4) edge(v'5);
			
			\draw (3,-4) node[above] {$C_4$};
			\draw[-] (1, -4) ellipse (1cm and 1.7cm);
			\node[vertex] (w'4) at (1,-3) [label=right:]{} edge(v'5);
			\node[vertex] (w'4) at (1,-4) [label=right:]{}  edge(v'5);
			\node[vertex] (w'5) at (1,-5) [label=right:]{} edge (v'5);
			
		\end{scope}
		
		
		\begin{scope}[shift={(14,0)}]
			
			\draw (-6,6) node[above] {$G_0$};
			\draw[-] (-4.3, 0) ellipse (2cm and 5.7cm);
			\node[vertex] (v'1) at (-4,4) [label=left:$v_1$]{};
			\node[vertex]  (v'2) at (-4,3) [label=left:$v_2$]{} ;
			\node[vertex]  (v'3) at (-4,2) [label=left:$v_3$]{};
			\node[vertex] (v'4) at (-4,0) [label=left:$v_4$]{};
			\node[vertex] (v'5) at (-4,-2) [label=left:$v_5$]{};
			
			\draw (3,4) node[above] {$C_1$};
			\draw[dashed] (1, 4.5) ellipse (1cm and 1.2cm);
			\node[vertex] (w'1') at (1,5) [label=right:]{} edge (v'1) edge (v'2) edge (v'3);
			\node[vertex] (w'1'') at (1,4) [label=right:]{} edge (v'1) edge (v'2) edge (v'3);
			
			\draw (3,1) node[above] {$C_2$};
			\draw[dashed] (1, 1.5) ellipse (1cm and 1.2cm);
			\node[vertex] (w'2) at (1,2) [label=right:]{} edge (v'2) edge (v'3);
			\node[vertex] (w'3) at (1,1) [label=right:]{} edge (v'2) edge (v'3);
			
			\draw (2.5,-1.5) node[above] {$C_3$};
			\node[vertex] (w'6) at (1,-1) [label=right:]{} edge(v'3) edge(v'4) edge(v'5);
			
			\draw (3,-4) node[above] {$C_4$};
			\draw[dashed] (1, -4) ellipse (1cm and 1.7cm);
			\node[vertex] (w'4) at (1,-3) [label=right:]{} edge(v'5);
			\node[vertex] (w'4) at (1,-4) [label=right:]{}  edge(v'5);
			\node[vertex] (w'5) at (1,-5) [label=right:]{} edge (v'5);
			
		\end{scope}
		
		
		\begin{scope}[shift={(0,-15)}]
			
			\draw (-6,6) node[above] {$G_1$};
			\draw[-] (-4.3, 0) ellipse (2cm and 5.7cm);
			\node[vertex]  (v'2) at (-4,3) [label=left:$v_2$]{} ;
			\node[vertex]  (v'3) at (-4,2) [label=left:$v_3$]{};
			
			\draw (3,1) node[above] {$C_2$};
			\draw[dashed] (1, 1.5) ellipse (1cm and 1.2cm);
			\node[vertex] (w'2) at (1,2) [label=right:]{} edge (v'2) edge (v'3);
			\node[vertex] (w'3) at (1,1) [label=right:]{} edge (v'2) edge (v'3);
			
		\end{scope}	
		
		
		\begin{scope}[shift={(14,-15)}]
			
			\draw (-6,6) node[above] {$G_3$};
			\draw[-] (-4.3, 0) ellipse (2cm and 5.7cm);
			\node[vertex]  (v'3) at (-4,2) [label=left:$v_3$]{};
			\node[vertex] (v'4) at (-4,0) [label=left:$v_4$]{};
			\node[vertex] (v'5) at (-4,-2) [label=left:$v_5$]{};
			
			\draw (3,-4) node[above] {$C_4$};
			\draw[dashed] (1, -4) ellipse (1cm and 1.7cm);
			\node[vertex] (w'4) at (1,-3) [label=right:]{} edge(v'5);
			\node[vertex] (w'4) at (1,-4) [label=right:]{}  edge(v'5);
			\node[vertex] (w'5) at (1,-5) [label=right:]{} edge (v'5);
			
		\end{scope}
		
	\end{tikzpicture}
	
	\caption{Graphs $G_0, G_1$ and $G_3$ constructed from the starlike graph $G$. Ellipses formed by continuous lines represent cliques, while the ones formed by dashed lines represent independent sets.}
	
	\label{fig:splitP}	
\end{figure}

\begin{lemma} \label{lem:Gi}
	If $G$ is a starlike graph, then for $i \in \{0,1, \ldots, t\}$, $G_i$ has a critical independent set $T_i$ contained in $Y_i$.
\end{lemma}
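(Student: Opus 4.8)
\noindent\emph{Proof plan.}
The strategy is to reduce everything to the independent part $Y_i$ of the split graph $G_i$, by establishing
\[
d_c(G_i)=\max\{\,d(J):J\subseteq Y_i\ \text{independent in}\ G_i\,\}.
\]
Once this is known, any $J$ realising the maximum (the empty set always being a candidate) is a critical independent set contained in $Y_i$, which is exactly what is claimed.

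First I would fix notation. Write $K=V(G_i)\setminus Y_i$, so that $K$ is $C'_i$ or $C'_i-\{x\}$; in either case $K$ is a clique and $Y_i$ is independent, hence $G_i$ is split and every independent set of $G_i$ either lies in $Y_i$ or has the form $T=\{c\}\cup I'$ with $c\in K$ and $I'\subseteq Y_i\setminus N_{G_i}(c)$. For such a $T$ the crucial point is the identity $N_{G_i}(T)=N_{G_i}(c)$: since $Y_i$ is independent we have $N_{G_i}(I')\subseteq K$, and since $c$ is adjacent to no vertex of $I'$ we get $N_{G_i}(I')\subseteq K\setminus\{c\}\subseteq N_{G_i}(c)$. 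Using $|N_{G_i}(c)|=(|K|-1)+|N_{G_i}(c)\cap Y_i|$ and $|I'|\le|Y_i|-|N_{G_i}(c)\cap Y_i|$, this gives
\[
d(T)=1+|I'|-|N_{G_i}(c)|\ \le\ 2+|Y_i|-|K|-2\,|N_{G_i}(c)\cap Y_i|,
\]
with equality when $I'=Y_i\setminus N_{G_i}(c)$.

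Next I would put $K_0=\{c\in K:N_{G_i}(c)\cap Y_i=\emptyset\}$. Because every vertex of a bag $C_j\subseteq Y_i$ is simplicial with closed neighbourhood $X_j=C_j\cup C'_j$ and $C'_j\subseteq C'_i$, one has $N(Y_i)\subseteq C'_i\cup Y_i$; consequently $N_{G_i}(Y_i)=N(Y_i)-Y_i=K\setminus K_0$ and $d(Y_i)=|Y_i|-|K|+|K_0|$. I would then split into three cases. If $K_0=\emptyset$, every $c\in K$ has $|N_{G_i}(c)\cap Y_i|\ge1$, so the bound above yields $d(T)\le|Y_i|-|K|=d(Y_i)$. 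If $|K_0|\ge2$, the bound is largest for $c\in K_0$ and yields $d(T)\le 2+|Y_i|-|K|\le|Y_i|-|K|+|K_0|=d(Y_i)$. Finally, if $|K_0|=1$, then $|N(Y_i)-Y_i|=|K|-1$; since in the case $K=C'_i-\{x\}$ the vertex $x$ lies outside $N(Y_i)$ and $N(Y_i)-Y_i=C'_i-\{x\}=K$, forcing $K_0=\emptyset$, we must be in the case $K=C'_i$, so $|N(Y_i)-Y_i|=|C'_i|-1$; but then $G_i$ was \emph{not} defined by removing a vertex, so the special-case hypothesis fails, and as its first half holds its second half must fail, giving $|Y_i|<|C'_i|-1=|K|-1$ and hence $d(T)\le 2+|Y_i|-|K|\le0=d(\emptyset)$, with $\emptyset\subseteq Y_i$.

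Combining the three cases, every independent set meeting $K$ has difference at most $\max\{d(Y_i),d(\emptyset)\}\le\max\{d(J):J\subseteq Y_i\}$, which proves the displayed equality for $d_c(G_i)$ and hence the lemma. I expect the case $|K_0|=1$ to be the only genuine obstacle: this is precisely the configuration the construction of $G_i$ is engineered to defuse --- either by deleting the offending vertex $x$, or, when that deletion does not occur, via the size restriction $|Y_i|<|C'_i|-1$, which makes the only potentially helpful clique set $\{c\}\cup Y_i$ (with $c\in K_0$) have nonpositive difference. The split-graph bookkeeping and the identity $N_{G_i}(T)=N_{G_i}(c)$ are routine by comparison.
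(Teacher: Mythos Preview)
Your argument is correct and is essentially the paper's proof reorganised. Both proofs exploit that $G_i$ is split with clique part $K$ (your notation; the paper's $C$) and independent part $Y_i$, so an independent set $T$ contains at most one clique vertex $c$, and both hinge on the construction's special case to dispose of the situation where $c$ has no neighbour in $Y_i$. The difference is only in packaging: the paper argues by local replacement---if $c$ has a neighbour $y\in Y_i$ then swap $c$ for $y$ (using $N[y]\subseteq N[c]$), and otherwise either drop $c$ (when $|N(Y_i)-Y_i|<|C'_i|-1$, so removing $c$ sheds at least one private neighbour) or observe $d(T)\le 0=d(\emptyset)$---whereas you compute $d(T)$ explicitly via the identity $N_{G_i}(T)=N_{G_i}(c)$ and then case on $|K_0|$. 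Your three cases $|K_0|=0,\ \ge 2,\ =1$ correspond exactly to the paper's ``$v$ has a $Y_i$-neighbour'', ``$|N(Y_i)-Y_i|<|C'_i|-1$'', and ``$|N(Y_i)-Y_i|=|C'_i|-1$ with $|Y_i|<|C'_i|-1$''.
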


\begin{proof}
	Note that $G_i$ is a split graph with bipartition $(C,Y_i)$ where $C = C'_i$ or $C = C'_i - \{x\}$ for $\{x\} = C'_i - N(Y_i)$. Let $T$ be an independent set of $G_i$ having a vertex $v \in C$. We show that there is an independent set $T'$ such that $T' \cap C \subset T \cap C$ such that $d(T') \ge d(T)$, which proves the result.
	
	If $v$ has a neighbor $y \in Y_i$, then $d((T - \{v\}) \cup \{y\}) \ge d(T)$ because $N[y] \subseteq N[v]$. Then, we can assume from now on that $v$ has no neighbors in $Y_i$. By the construction of $G_i$, we have that $C = C'_i$. If $|N(Y_i) - Y_i| < |C'_i| -1$, then $d(T - \{v\}) \ge d(T)$. It remains to consider $|N(Y_i) - Y_i| = |C'_i| -1$. In this case, we have that $|Y_i| < |C'_i| -1$. Finally, note that $d(T) \le 0$ and that $d(\emptyset) \ge d(T)$.
\end{proof}

\begin{theorem} \label{the:charac}
	If $G$ is a starlike graph, then $r(G) = \underset{i \in \{0,1, \ldots, t\}}{\max} \left\{ |X_i| + d_c(G_i) \right\}$.
\end{theorem}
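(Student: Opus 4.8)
The plan is to establish the two inequalities $r(G)\ge |X_i|+d_c(G_i)$ (for every $i$) and $r(G)\le\max_i\{|X_i|+d_c(G_i)\}$ separately. Throughout I use that a set $S$ is m-convexly independent exactly when no $v\in S$ is an internal vertex of an induced path whose two endpoints lie in $\langle S-\{v\}\rangle$, that simplicial vertices are never internal vertices of induced paths (so every subset of $Z$ is m-convexly independent), and that, by the reasoning behind Lemma~\ref{lem:hull}, an induced path of $G$ has at most four vertices, all of whose internal vertices lie in the clique $C'_0$.

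For the lower bound, fix $i$ and, using Lemma~\ref{lem:Gi}, pick a critical independent set $T_i\subseteq Y_i$ of $G_i$. Put $S_i=(C'_i\setminus N_{G_i}(T_i))\cup C_i\cup T_i$. Since $N_{G_i}(T_i)\subseteq C'_i$ and the three sets $C_i$, $C'_i$, $Y_i$ are pairwise disjoint, $|S_i|=|C_i|+|C'_i|-|N_{G_i}(T_i)|+|T_i|=|X_i|+\big(|T_i|-|N_{G_i}(T_i)|\big)=|X_i|+d_c(G_i)$. It remains to check that $S_i$ is m-convexly independent; the vertices of $C_i\cup T_i\subseteq Z$ cause no trouble, so let $v\in C'_i\setminus N_{G_i}(T_i)$ and assume $v\in\langle S_i-\{v\}\rangle$. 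By Lemma~\ref{lem:hull} there is an induced path on three or four vertices starting at a simplicial vertex $u\in S_i\cap Z\cap N(v)$, having $v$ as its second vertex, and ending in $\langle S_i-\{v\}\rangle$. If $u\in T_i$ then $v$ is adjacent to a vertex of $T_i$, so $v\in N_{G_i}(T_i)$, a contradiction; hence $u\in C_i$ and $N[u]=X_i$. The vertex following $v$ on the path is not adjacent to $u$, hence lies outside $X_i$; tracking the remaining one or two vertices (each either simplicial or in the clique $C'_0$) and invoking Lemma~\ref{lem:hull} once more forces some vertex of $T_i$ to be adjacent to a vertex of $C'_i$ lying outside $X_i$, which is absurd. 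So $S_i$ is m-convexly independent and $r(G)\ge|X_i|+d_c(G_i)$.

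For the upper bound, let $S$ be an m-convexly independent set of maximum size and write $A=S\cap C'_0$, $Z_S=S\cap Z$. For $v\in A$, since $v\notin\langle S-\{v\}\rangle$, the set $N(v)\cap\langle S-\{v\}\rangle$ must induce a clique (two nonadjacent vertices of it, together with $v$, would form a length-two induced path forcing $v$ into the hull); in particular $N(v)\cap(S\setminus\{v\})$ is a clique, which, as simplicial vertices of distinct cliques $C_j$ are nonadjacent, forces the simplicial vertices of $S$ adjacent to $v$ to lie in a single clique $C_{k(v)}$ and forces $A\subseteq C'_{k(v)}$. A short argument shows that this index is the same, say $k^*$, for all $v\in A$ that have a simplicial neighbour in $S$; thus $A\subseteq C'_{k^*}$ and every simplicial vertex of $S$ adjacent to a vertex of $A$ lies in $C_{k^*}$. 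If no such $v$ exists, or if $k^*=0$, take $i=0$: then $A$ and $N_{G_0}(Z_S\cap Y_0)$ are disjoint subsets of $C'_0$, and since $Z_S\subseteq C_0\cup Y_0$ with $Z_S\cap Y_0$ independent in $G_0$, we get $|S|=|Z_S|+|A|\le|C_0|+|Z_S\cap Y_0|+|A|\le|C_0|+|C'_0|+\big(|Z_S\cap Y_0|-|N_{G_0}(Z_S\cap Y_0)|\big)\le|X_0|+d_c(G_0)$.

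In the remaining case $k^*\ge1$ I take $i=k^*$, and the crux is to prove $Z_S\subseteq C_{k^*}\cup Y_{k^*}$. If this fails, some $w'\in Z_S$ lies in a clique $C_{j'}$ with $j'\ne k^*$ and $C'_{j'}\not\subseteq C'_{k^*}$; choose $z\in C'_{j'}\setminus C'_{k^*}$. As $w'$ is not adjacent to any vertex of $A$ (its only non-simplicial neighbours are $C'_{j'}$), we get $A\cap C'_{j'}=\emptyset$. Now take any $v_0\in A$ and a simplicial vertex $w_0\in C_{k^*}\cap Z_S$ (nonempty by the definition of $k^*$, and adjacent to every vertex of $A$ since $A\subseteq C'_{k^*}\subseteq X_{k^*}=N[w_0]$); using that $C'_0$ is a clique, that $z\notin X_{k^*}$, that $v_0\notin C'_{j'}$, and that $w_0,w'$ lie in different cliques of simplicial vertices, one checks that $w_0,v_0,z,w'$ is an induced path on four distinct vertices whose endpoints lie in $S\setminus\{v_0\}$ and whose interior contains $v_0$, so $v_0\in\langle S\setminus\{v_0\}\rangle$, contradicting maximality. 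Hence $Z_S\subseteq C_{k^*}\cup Y_{k^*}$. Moreover no vertex of $A$ is adjacent to a vertex of $T:=Z_S\cap Y_{k^*}$ (it would be a simplicial neighbour of a vertex of $A$ outside $C_{k^*}$), so $A$ and $N_{G_{k^*}}(T)$ are disjoint subsets of $C'_{k^*}$; as $T$ is independent in $G_{k^*}$, this gives $|S|=|Z_S|+|A|\le|C_{k^*}|+|T|+|A|\le|C_{k^*}|+|C'_{k^*}|+\big(|T|-|N_{G_{k^*}}(T)|\big)\le|X_{k^*}|+d_c(G_{k^*})$, which finishes the argument. I expect the main obstacle to be precisely this induced-$P_4$ argument, together with the bookkeeping needed to pin down the common index $k^*$ and to handle the degenerate situations (an empty $A$, an empty $Y_i$, and the deleted vertex $x$ in the definition of $G_i$).
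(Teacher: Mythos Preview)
Your proposal is correct and follows essentially the same route as the paper: the same set $S_i=(C'_i\setminus N(T_i))\cup C_i\cup T_i$ for the lower bound, and for the upper bound the same dichotomy on whether some vertex of $S\cap C'_0$ has a simplicial $S$-neighbour, followed by the same induced $P_3$/$P_4$ arguments to pin down a unique index and to force $S_Z\setminus C_i\subseteq Y_i$. One cosmetic slip: in your lower-bound paragraph the phrase ``a vertex of $C'_i$ lying outside $X_i$'' is impossible since $C'_i\subseteq X_i$; what your tracking argument actually yields is a vertex of $C'_0\setminus C'_i$ that would have to be a neighbour of some $T_i$-vertex, contradicting $N(T_i)\cap C'_0\subseteq C'_i$. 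With that correction your sketch matches the paper's reasoning.
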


\begin{proof}
	We begin by showing that for $i \in \{0,1, \ldots, t\}$, $G$ has an m-convexly independent set of size $|X_i| + d_c(G_i)$.
	Let $T_i$ be a critical independent set of $G_i$. By Lemma~\ref{lem:Gi}, we can assume that $T_i$ contains only vertices of $Y_i$.
	Define $S_i = (C'_i - N(T_i)) \cup T_i \cup C_i$.
	Observe that $|S_i| = |X_i| + d_c(G_i)$.
	Suppose by contradiction that $v \in \langle S_i - \{v\} \rangle$ for some $v \in S_i$.
	By Lemma~$\ref{lem:hull}$, $v$ is an internal vertex of an induced $(u,u')$-path $P$ such that $u \in (S_i - \{v\}) \cap Z \cap N(v)$ and $u' \in \langle S_i - \{v\} \rangle$, which implies that $v \in C'_i$ and $u \in C_i$. Lemma~$\ref{lem:hull}$ also implies that $\langle S_i - \{v\} \rangle - (S_i - \{v\}) \subseteq C'_i$.
	Since $u$ is adjacent to all vertices of $C'_i$, we conclude that $u' \in C_j$ for some $j \ne i$. Since $u'$ is a simplicial vertex and $u' \in \langle S_i - \{v\} \rangle$, we have that $u' \in (S_i - \{v\})$. Therefore, $P$ has exactly 3 vertices and $vu' \in E(G)$, which is a contradiction, because $v$ has no neighbors belonging to $C_j \cap S_i$ for $j \ne i$.
	
	Now, let $S$ be a maximum m-convexly independent set of $G$. We will show that $|S| \le |S_i|$ for some $i \in \{0,1, \ldots, t\}$. Write $S_Z = S \cap Z$ and $S_{\overline{Z}} = S - S_Z = S \cap C'_0$.
	First, consider that there is no edge $uv$ such that $u \in S_{\overline{Z}}$ and $v \in S_Z$. Hence, either $S_{\overline{Z}} = \emptyset$ or $S_Z \cap C_0 = \emptyset$. In the former case, since $Y_0 \cup C_0 = Z$, we have that $|S| \le |Y_0| + |C_0|$.
	Since $|Y_0| \le |Y_0| - |N(Y_0) \cap C'_0| + |C'_0|$, we have that
	$|S| \le |Y_0| + |C_0| \le |Y_0| - |N(Y_0) \cap C'_0| + |C'_0| + |C_0| \le d_c(G_0) + |X_0| = |S_0|$.
	In the latter case, we have that $|S_{\overline{Z}}| + |N(S_Z) \cap C'_0| \le |C'_0|$. Since $S_Z$ is an independent set of $G_0$, it holds that $|S_Z| - |N(S_Z) \cap C'_0| \le d_c(G_0)$. Therefore, in this case, $|S| = |S_{\overline{Z}}| + |S_Z| \le |S_{\overline{Z}}| + |N(S_Z) \cap C'_0| + d_c(G_0) \le |C'_0| + d_c(G_0) \le |X_0| + d_c(G_0) = |S_0|$.
	
	Now, consider that there is an edge $uv_i$ such that $u \in S_{\overline{Z}}$ and $v_i \in S_Z \cap C_i$ for some $i \in \{0,1, \ldots, t\}$. We claim that if there is another edge $u'v'$ such that $u' \in S_{\overline{Z}}$ and $v' \in S_Z \cap C_j$, then $i = j$. Suppose the contrary. First, consider that $u, v_i$ and $v'$ can be chosen such that $u \in N(v_i) \cap N(v')$. Then, $S$ is not an m-convexly independent set of $G$ because of the induced path $v_iuv'$. Hence, we can assume that there is no $u'' \in S_{\overline{Z}}$ such that $u'' \in N(v_i) \cap N(v')$. Now, note that $v_iuu'v'$ is an induced path of $G$, which implies that $S$ is not m-convexly independent. Since we reached a contradiction, the claim does hold.
	
	Next, we prove that for $j \ne i$ and $v_j \in S_Z \cap C_j$, it holds that $N(v_j) \cap C'_0 \subseteq C'_i$. Supposing the contrary, there is $v' \in S_Z \cap C_j$ for $j \ne i$ such that there is an edge $u'v'$ where $u' \in C'_0 - C'_i$. By the claim, we know that $v'u \not\in E(G)$. Thus, we have that $v'u'uv_i$ is an induced path of $G$ containing $u$, which is a contradiction. Hence, $N(v) \cap C'_0 \subseteq C'_i$ for every vertex $v \in S_Z$.
	
	If there was a vertex $w \in C'_0 - C'_i$ belonging to $S$, then $G$ would have the induced path $wuv_i$ and $S$ would not be m-convexly independent. Therefore, from the above, we conclude that $S - C_i$ is contained in $V(G_i)$.
	
	It remains to show that $|S| \le |S_i|$ in this case as well. Since $S_Z - C_i$ is an independent set of $G_i$, it holds that $|S_Z - C_i| - |N(S_Z - C_i) \cap C'_i| \le d_c(G_i)$.
	Since there is no edge with one extreme in $S_{\overline{Z}}$ and the other in $S_Z - C_i$, we have that 
	$|S_{\overline{Z}}| + |N(S_Z - C_i) \cap C'_i| \le |C'_i|$.
	Therefore,
	
	\[|S| = |S_{\overline{Z}}| + |S_Z| =\]
	
	\[|S_{\overline{Z}}| + |S_Z - C_i| + |S_Z \cap C_i| \le\]
	
	\[|S_{\overline{Z}}| + |N(S_Z - C_i) \cap C'_i| + d_c(G_i) + |S_Z \cap C_i| \le\]
	
	\[|C'_i| + d_c(G_i) + |S_Z \cap C_i| \le\]
	
	\[ |C'_i| + d_c(G_i) + |C_i| = |X_i| + d_c(G_i) = |S_i|.\]
	
\end{proof}

\begin{corollary}\label{cor:starlike}
	The {\sc Monophonic rank} problem restricted to starlike graphs belongs to $\P$. 
\end{corollary}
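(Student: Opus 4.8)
\emph{Proof proposal for Corollary~\ref{cor:starlike}.}

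The plan is to turn Theorem~\ref{the:charac} into an algorithm. That theorem asserts $r(G) = \max_{i \in \{0,1,\ldots,t\}} \{ |X_i| + d_c(G_i) \}$, so once a starlike partition of $G$ is fixed there are only $t+1 \le |V(G)|$ terms to evaluate, and $r(G)$ is the maximum of these $O(n)$ numbers. Hence it suffices to show that the partition, the cliques $X_i$, the split graphs $G_i$, and the values $d_c(G_i)$ can all be produced in polynomial time.

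First I would check that every combinatorial object in the formula is polynomial-time computable from the input graph. Finding a partition $(V_0,\ldots,V_t)$ of $V(G)$ into cliques witnessing that $G$ is starlike is part of the standard theory of these graphs~(\cite{Gustedt:1993,Cerioli2006}) and takes polynomial time: the set $Z$ of simplicial vertices is identified directly, the classes outside $V_0$ are obtained by grouping the non-$V_0$ vertices according to their neighbourhoods in $V_0$, and $V_0$ is taken to be an appropriate maximal clique. Given the partition, for each $i$ one reads off from the adjacency lists the maximal clique $X_i \supseteq V_i$, the sets $C_i = X_i \cap Z$, $C'_i = X_i - C_i$, and $Y_i$ (the union of the $C_j$ with $C'_j \subseteq C'_i$), and then assembles $G_i$ exactly as in its definition --- including, in the special case $|N(Y_i)-Y_i| = |C'_i|-1$ and $|Y_i| \ge |C'_i|-1$, the deletion of the unique vertex $x$ with $\{x\} = (N(Y_i)-Y_i) - C'_i$. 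Each $G_i$ is a split graph on at most $|V(G)|$ vertices with clique part $C \in \{C'_i,\, C'_i - \{x\}\}$ and independent part $Y_i$, and the whole construction costs polynomial time.

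The only real obstacle is computing $d_c(G_i)$, the critical independence difference of a split graph $H$ with bipartition $(C,Y)$, $C$ a clique and $Y$ independent. Here one exploits that an independent set of $H$ meets $C$ in at most one vertex, so $d_c(H)$ is the maximum of three easily handled quantities: (i) $0$, from the empty set; (ii) $\max_{T \subseteq Y}(|T| - |N(T)|)$, which by the defect form of K\H{o}nig's theorem equals $|Y|$ minus the size of a maximum matching of $H[C \cup Y]$, and whose optimiser is the classical set of $Y$-vertices reachable by alternating paths from the unsaturated vertices of $Y$; and (iii) for each of the $|C|$ vertices $c \in C$, the value $d(\{c\} \cup (Y - N(c)))$, which is the best difference attainable by an independent set containing $c$ because $N(\{c\} \cup T') = (C - \{c\}) \cup (N(c) \cap Y)$ does not depend on the choice of $T' \subseteq Y - N(c)$. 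Each of (i)--(iii) is computed in polynomial time (a single bipartite matching for (ii), a linear scan for (iii)), so $d_c(G_i)$ --- together with a witnessing critical independent set contained in $Y_i$, as promised by Lemma~\ref{lem:Gi} --- is obtained in polynomial time. Evaluating $|X_i| + d_c(G_i)$ for every $i$ and returning the maximum therefore computes $r(G)$ in polynomial time, proving that {\sc Monophonic rank} restricted to starlike graphs lies in $\P$.
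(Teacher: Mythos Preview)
Your argument is correct and follows the same skeleton as the paper's proof: invoke Theorem~\ref{the:charac}, observe that there are only $t+1=O(n)$ terms $|X_i|+d_c(G_i)$ to evaluate, and check that each ingredient can be computed in polynomial time. The difference lies in how the two subroutines are handled. The paper dispatches them by citation: it appeals to~\cite{PKHHT2000} for producing the starlike partition and the graphs $G_i$, and to Ageev's result~\cite{A-1994} that the critical independence difference of an arbitrary graph is computable in polynomial time. You instead unpack both steps by hand, and in particular replace the black-box call to~\cite{A-1994} by a direct, self-contained computation of $d_c$ for split graphs via the defect form of K\H{o}nig's theorem and a single bipartite matching (plus the easy scan over the $|C|$ choices of a clique vertex). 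This buys you an elementary proof that does not rely on the general critical-independence machinery, at the cost of a longer argument; the paper's version is shorter but less transparent. One small notational slip: when you write ``a maximum matching of $H[C\cup Y]$'' you mean the bipartite graph on $C\cup Y$ with only the $C$--$Y$ edges retained, since $H[C\cup Y]=H$ itself still contains the clique edges inside $C$; this does not affect the computation of $|T|-|N(T)|$ for $T\subseteq Y$, but it is worth stating precisely.
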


\begin{proof}
	By Theorem~\ref{the:charac}, for computing the monophonic rank of a starlike graph $G$, it suffices to compute the critical independence difference of a linear number of subgraphs of $G$. Since such subgraphs can be found in polynomial time~(\cite{PKHHT2000}) and the critical independence difference can be computed in polynomial time for general graphs~(\cite{A-1994}), the result thus hold.
\end{proof}

\nocite{*}
\bibliographystyle{abbrvnat}
\bibliography{mrank}
\label{sec:biblio}

\end{document}